\documentclass[12pt]{amsart}
\usepackage{amsmath,amsfonts,amstext,amsthm,amssymb,verbatim,bbm,hyperref}

\usepackage{color}

\usepackage[margin=1.1in]{geometry}

\newtheorem{lemma}{Lemma}

\newtheorem{theorem}[lemma]{Theorem}
\newtheorem{proposition}[lemma]{Proposition}

\newtheorem{corollary}[lemma]{Corollary}
\newtheorem*{thm*}{Theorem}

\newtheorem{definition}[lemma]{Definition}

\theoremstyle{remark}
\newtheorem{remark}[lemma]{Remark}
\newtheorem*{remark*}{Remark}

\numberwithin{equation}{section}
\numberwithin{lemma}{section}

\numberwithin{equation}{section}

\def\dec{\mathrm{dec}}
\def\tr{\operatorname{tr }}

\def\Airy{\operatorname{Ai}}
\def\TW{\operatorname{TW}}

\newcommand{\Ai}{\mathfrak{Ai}}
\newcommand{\E}{\mathbb E}

\newcommand{\ii}{\mathbf{i}}

\renewcommand{\aa}{\mathfrak a}
\newcommand{\bb}{\mathfrak b}

\title{The KPZ equation and moments of random matrices}

\author{Vadim Gorin}

\address{Department of Mathematics,
Massachusetts Institute of Technology,  Cambridge, MA, USA and Institute for
Information Transmission Problems of Russian Academy of Sciences, Moscow, Russia.}
\email{vadicgor@gmail.com}

\author{Sasha Sodin}

\address{School of Mathematical Sciences, Queen Mary University of London, London E1 4NS,
United Kingdom \, and School of Mathematical Sciences, Tel Aviv University, Tel Aviv,
69978, Israel.}

\email{a.sodin@qmul.ac.uk}

\begin{document}

\begin{abstract}
 The logarithm of the diagonal matrix element of a high power of a random matrix
 converges to the Cole--Hopf solution of the Kardar--Parisi--Zhang equation in the sense of one--point distributions.
\end{abstract}

\maketitle

\section{Introduction and results}

\subsection{KPZ equation} Let $Z(t, x)$, be the solution of the stochastic heat equation
\begin{equation}\label{eq:she}\frac{\partial Z}{\partial t} = \frac{1}{2} \frac{\partial^2 Z}{\partial x^2} - \dot WZ~,
\quad x\in\mathbb R, \, t\ge 0;\quad
Z(0, x) = \delta(x)~,\end{equation}
where $\dot W$ is the space-time white noise. The
 logarithm $H = - \log Z$ is known as the Cole--Hopf solution of the Kardar--Parisi--Zhang \cite{KPZ}
  equation with the narrow-wedge
 initial condition. Informally, it solves a singular stochastic PDE
\begin{equation}\label{eq:kpz}
 \frac{\partial H}{\partial t}=\frac{1}{2} \frac{\partial^2 H}{\partial x^2} -\frac{1}{2}\left(\frac{\partial H}{\partial x}\right)^2 +\dot W~;
\end{equation}
the properties of the solutions of \eqref{eq:kpz} are discussed in
\cite{Corwin_KPZ,QS,GJ_KPZ}, and a rigorous regularisation  -- in \cite{Hairer_KPZ,
GP_KPZ1,GP_KPZ2}.

\bigskip

We also need a half--line version of \eqref{eq:she}, which was recently studied in
\cite{BBCW, Par}. Introduce $\tilde Z(t, x)$ as the solution to
$$
 \frac{\partial \tilde Z}{\partial t} = \frac{1}{2} \frac{\partial^2 \tilde Z}{\partial x^2} - \dot W\tilde Z~, \quad x\ge 0,\, t\ge 0;\quad
 \left(\frac{\partial \tilde Z}{\partial x}+\frac{1}{2} \tilde Z\right)_{x=0}=0,\quad t>0;\quad
\tilde Z(0, x) = \delta(x).
$$
$\tilde H = -\log\tilde Z$ is the Cole--Hopf solution for KPZ with a Neumann
boundary condition at $x=0$.

\subsection{Random matrix edge}
 On the other side of the picture, let $X_N$ be a $N\times N$ matrix of independent,
 identically distributed  Gaussian random variables. In $\beta=1$ case they are real $N(0,2)$
 and in $\beta=2$ case they are complex with real and imaginary part independent $N(0, 1)$. The distribution of the Hermitian
 matrix
 $M_N=\frac{1}{2}(X_N+ X_N^*)$ is known as GOE at $\beta=1$ and GUE at $\beta=2$. Let
\[ \lambda_{1,N} > \lambda_{2,N} > \cdots > \lambda_{N,N} \]
 be the eigenvalues of $M_N$. Then (see \cite{For1,TW1,TW2} and the textbooks \cite{For},\cite{PSbook})
\begin{equation}\label{eq:toairy}
\left\{ N^{\frac16} \left(\lambda_{j,N} -2\sqrt{N}\right) \right\}_{j=1}^{\infty} \underset{N \to \infty}{\longrightarrow}  \Ai_{\beta}, \quad \beta=1,2,
\end{equation}
where $\Ai_{\beta}$ is the Airy$_{\beta}$ point process, a realisation of which is an
ordered sequence  $\lambda_1 > \lambda_2 > \cdots$ The relation \eqref{eq:toairy} means that the right-most
$k$ rescaled eigenvalues converge in distribution to the $k$ rightmost points of the Airy process,
for any $k$.

For complex matrices, $\Ai_2$ is a determinantal point process on the real line defined by the
kernel
\[ K_{\operatorname{Airy}}(\lambda, \lambda') = \frac{\Airy(\lambda)\Airy'(\lambda')-\Airy'(\lambda)\Airy(\lambda')}{\lambda-\lambda'} \]
For real matrices, $\Ai_1$ is a Pfaffian point process whose kernel is also written
in terms of the Airy function $\Airy(x)$. For both $\beta=1,2$, the distribution of
the first particle $\lambda_1$ is known as the Tracy--Widom$_\beta$ law,
$\TW_\beta$. See further \cite{TW2} for the case $\beta=4$ and \cite{RRV} for
arbitrary $\beta
>0$.

\subsection{Main results} The key notion of our article is the decorated Airy process $\Ai^\dec$.

\begin{definition} \label{Def_decorated_Airy} Let $u_i^j$, $v_i^j$, $i,j=1,2,\dots$ be i.i.d.\ standard real Gaussian random variables.
For $n=1,2,\dots,$ let $W_n$ be a $\mathbb N\times \mathbb N$ Hermitian rank $1$ matrix
$$[W_n]_{a,b}=\begin{cases} u^n_a u^n_b, & \beta=1\\
 \frac12 (u^n_a+\ii v^n_a)(u^n_b-\ii v^n_b),& \beta=2. \end{cases}
$$
$\Ai^\dec_\beta$ is (the distribution of) the random matrix--valued measure
$$
 \sum_{n=1}^{\infty} W_n \delta_{\lambda_n},\quad \{\lambda_n\} \sim \Ai_\beta.
$$
\end{definition}
For each continuous function $f:\mathbb R\to\mathbb C$ such that $ \int_{-\infty}^0 |f(\lambda)|
\sqrt{|\lambda|} \, d\lambda < \infty~$, the integral $\int f(\lambda) \Ai^\dec_\beta (d\lambda)$
is a Hermitian random matrix of size $\mathbb N\times \mathbb N$ with matrix elements
\begin{equation} \label{eq_decorated_Airy}
 \left[\int_{\mathbb R} f(\lambda) \Ai^\dec_\beta (d\lambda)\right]_{a,b}=\sum_{n=1}^\infty [W_n]_{a,b}\, f(\lambda_n), \qquad a,b=1,2,\dots.
\end{equation}

\begin{remark}
 The formula \eqref{eq_decorated_Airy} defines a law of an infinite Hermitian matrix invariant
 under conjugations by matrices from the infinite--dimensional orthogonal/unitary group ($O(\infty)=\bigcup_N O(N)$,
 $U(\infty)=\bigcup_N U(N)$, where we embed the group of dimension $N$ into the group of dimension
 $N+1$ by fixing the last basis vector). In the notation of \cite{OV}, the ergodic decomposition
 of this matrix is given by the point process $f(\Ai_\beta)$ on the $\alpha$--parameters of
 the decomposition. It would be interesting to compute the joint law of several elements of the matrix \eqref{eq_decorated_Airy} explicitly
  (at least for specific choices of $f$).

\end{remark}

\smallskip

The first result asserts that the one--point distributions for the stochastic heat equation at $x=0$ are given by the integrals of exponents against a diagonal element of $\Ai^\dec$.

\begin{theorem}\label{Theorem_KPZ} For each $\alpha > 0$ (but not jointly for several $\alpha$'s),
in distribution
\begin{align}
\label{eq_KPZ_1} &\tilde Z(2 \alpha^3, 0) \exp(\alpha^3/12)
\stackrel{d}{=} 2 \left[\int e^{\alpha \lambda} \Ai^\dec_1(d\lambda)\right]_{1,1}
\\ \notag &\quad =2 \sum_{n=1}^\infty (u^n_1)^2 \exp(\alpha\lambda_n),
 \quad \{\lambda_n\}\sim \Ai_1,
\\  \label{eq_KPZ_2} &Z(2 \alpha^3, 0) \exp(\alpha^3/12)
\stackrel{d}{=} \left[\int e^{\alpha \lambda} \Ai^\dec_2(d\lambda)\right]_{1,1}
\\  \notag &\quad = \sum_{n=1}^\infty \frac{(u^n_1)^2+(v^n_1)^2}{2} \exp(\alpha\lambda_n), \quad \{\lambda_n\}\sim \Ai_2.
\end{align}
\end{theorem}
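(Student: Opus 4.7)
The plan is to establish each identity by showing that the Laplace transforms of both sides coincide; since both sides are a.s.\ nonnegative, this will suffice.

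The first step is to compute the Laplace transform of the right-hand side of \eqref{eq_KPZ_1}--\eqref{eq_KPZ_2}. Condition on the Airy$_\beta$ process: the weights $[W_n]_{1,1}$ are independent of $\{\lambda_n\}$ and of each other, and are $\chi^2_1$-distributed for $\beta=1$ and Exp$(1)$-distributed for $\beta=2$. Using $\mathbb{E}[e^{-ag^2}]=(1+2a)^{-1/2}$ for a standard Gaussian $g$, for $\beta=2$ one obtains
$$\mathbb{E}\left[\exp\left(-u\left[\int e^{\alpha\lambda}\Ai^{\dec}_2(d\lambda)\right]_{1,1}\right)\right] = \mathbb{E}_{\Ai_2}\left[\prod_n \frac{1}{1+ue^{\alpha\lambda_n}}\right],$$
while for $\beta=1$ each factor is $(1+4ue^{\alpha\lambda_n})^{-1/2}$. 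These are multiplicative functionals of the Airy$_\beta$ process, so the determinantal structure for $\beta=2$ and the Pfaffian structure for $\beta=1$ convert them into a Fredholm determinant and a Fredholm Pfaffian, respectively. Using the factorization $K_{\mathrm{Airy}}(x,y)=\int_0^\infty \Airy(x+s)\Airy(y+s)\,ds$ together with the cyclicity identity $\det(I-AB)=\det(I-BA)$, these can be rewritten as operators on $L^2(\mathbb{R}_+)$ with kernel $\int\tfrac{ue^{\alpha\lambda}}{1+ue^{\alpha\lambda}}\Airy(x+\lambda)\Airy(y+\lambda)\,d\lambda$ (or the corresponding matrix-valued Pfaffian kernel).

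The second step is to match this with the known one-point Laplace transform of the SHE partition function. For $\beta=2$, the Amir-Corwin-Quastel crossover formula gives $\mathbb{E}[\exp(-uZ(t,0)e^{t/24})]$ as precisely such a Fredholm determinant with $\alpha$ replaced by $\kappa=(t/2)^{1/3}$; the substitution $t=2\alpha^3$ produces $\kappa=\alpha$ and $t/24=\alpha^3/12$, exactly matching both the deterministic prefactor and the kernel. For $\beta=1$ with the Neumann boundary, the corresponding Laplace transform of $\tilde Z(t,0)e^{t/24}$ was derived in \cite{BBCW,Par} as a Fredholm Pfaffian whose kernel is an Airy-based object adapted to the half line, which should match the Pfaffian produced in step one.

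The main obstacle is this final matching in the $\beta=1$ case. Whereas the $\beta=2$ identification is essentially automatic once both sides are written as Fredholm determinants of the same integrable kernel, the Pfaffian coming from the Airy$_1$ side and the Pfaffian from the half-line KPZ literature are written in distinct forms, and bringing them to a common shape requires nontrivial algebra on $2\times 2$ matrix kernels (Schur-complement manipulations and conjugations of the symplectic form). A separate technical point, easily handled, is the a.s.\ convergence of the defining series $\sum_n [W_n]_{1,1}e^{\alpha\lambda_n}$, which follows from the asymptotics $\lambda_n\sim-(3\pi n/2)^{2/3}$ combined with a Borel--Cantelli estimate on the i.i.d.\ Gaussian weights.
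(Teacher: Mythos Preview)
Your overall architecture---match Laplace transforms, and compute the right-hand side by conditioning on the Airy points and integrating out the Gaussian weights---is exactly what the paper does, and your step-one computation is correct and identical to the paper's.

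Where you diverge is in step two. You propose to convert the multiplicative functional $\mathbb E\prod_n (1+u e^{\alpha\lambda_n})^{-1}$ (resp.\ the square-root version) into a Fredholm determinant (resp.\ Pfaffian), then separately write the KPZ Laplace transform as a Fredholm determinant/Pfaffian via \cite{ACQ} or \cite{BBCW,Par}, and finally match the two kernels. The paper skips this entire detour: it quotes \cite[Theorem~2.1]{BG} for $\beta=2$ and \cite[Theorem~B]{BBCW}, \cite[Corollary~1.3]{Par} for $\beta=1$, which already state the KPZ Laplace transform \emph{directly} as the multiplicative functional over the Airy$_\beta$ point process. With those citations in hand, both sides of \eqref{eq_Laplace_need} are multiplicative functionals of the same form, and the proof is two lines. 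In particular, the ``main obstacle'' you flag---aligning two differently-presented $2\times 2$ Pfaffian kernels---never arises, because the Fredholm-Pfaffian-to-multiplicative-functional conversion has already been carried out in \cite{BBCW,Par}. Your route would work in principle, but it re-derives results that the paper simply cites.
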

 The $\beta=2$ part of Theorem
\ref{Theorem_KPZ} is a close relative of the determinantal formula for the Laplace transform of the
one--point distribution of SHE computed in \cite{ACQ,CLDR,Dots,SS}   and
linked to the Airy point process in \cite{BG}. The $\beta=1$ part is related to Laplace transforms
in \cite{BBCW,Par}. The proof of Theorem \ref{Theorem_KPZ} (based on all these results) is
given in Section~\ref{s:proofs}.

One application of Theorem \ref{Theorem_KPZ} is that it makes the $T\to+\infty$
limit for the solution to the KPZ equation immediate, leading to $\TW_\beta$
distribution after proper centering and rescaling.

\medskip

On the random matrix side, the decorated Airy process governs the edge asymptotic behavior of large
random matrices.

\begin{theorem} \label{Theorem_edge} Let $M_N$ be a Hermitian (Wigner) matrix with
independent (up to symmetry) real/complex elements, so that the moments of the
matrix elements $\mathbb E [M_N]_{i,j}^k \overline{[M_N]_{i,j}^\ell}$ with $k + \ell
\leq 4$ match those of  GOE/GUE, respectively, and suppose
that\begin{equation}\label{eq:techn}\begin{split} \sup_{N} \max_{1 \leq i,j \leq N} \mathbb E |[M_N]_{i,j}|^{C_0} <\infty~,
\end{split}\end{equation}
where $C_0$ is the absolute constant from \cite{TV}.
 Then for all $\alpha > 0$
\begin{equation}\label{eq_matrix_element_convergence}
\lim_{N\to\infty} \frac{N}{2}  \left[\left(\frac{M_N}{2\sqrt N}\right)^{2 \lfloor
\alpha N^{2/3} \rfloor}+ \left(\frac{M_N}{2\sqrt N}\right)^{2 \lfloor \alpha N^{2/3}
\rfloor+ 1}\right]_{a,b=1}^\infty  = \int_\mathbb R \exp(\alpha \lambda)
\Ai^\dec_\beta(d\lambda),
\end{equation}
in the sense of the convergence of joint distributions for finitely many $a$, $b$, and $\alpha$'s.
\end{theorem}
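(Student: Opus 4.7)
The plan is to diagonalise $M_N$ and reduce \eqref{eq_matrix_element_convergence} to the joint edge asymptotics of eigenvalues and eigenvector entries; the factor $N/2$ and the specific combination of the $2k$-th and $(2k+1)$-st powers of $M_N/(2\sqrt N)$ are precisely what is needed to extract the right-edge contribution with the correct normalisation.

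First, I would write $M_N = \sum_{j=1}^N \lambda_{j,N}\,\psi_j\psi_j^*$, so that the quantity inside the limit on the left-hand side of \eqref{eq_matrix_element_convergence} equals
\begin{equation*}
\tfrac{N}{2}\sum_{j=1}^N \psi_j(a)\overline{\psi_j(b)}\,p_k\!\left(\tfrac{\lambda_{j,N}}{2\sqrt N}\right), \qquad p_k(x) := x^{2k}(1+x), \quad k = \lfloor\alpha N^{2/3}\rfloor.
\end{equation*}
Since $p_k(-1)=0$ and $p_k(1)=2$, the left edge is killed and the bulk decays exponentially: at an edge eigenvalue $\lambda_{j,N} = 2\sqrt N + N^{-1/6}\tilde\lambda_j$ a Taylor expansion together with $k/N^{2/3}\to\alpha$ gives $p_k(\lambda_{j,N}/(2\sqrt N)) \to 2\,e^{\alpha\tilde\lambda_j}$, whereas for the $j$-th eigenvalue from the top rigidity locates it at $\lambda_{j,N} \approx 2\sqrt N - c\,j^{2/3}N^{-1/6}$, giving the tail bound $|p_k(\lambda_{j,N}/(2\sqrt N))| \lesssim e^{-c\alpha j^{2/3}}$, summable in $j$.

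Next, I would invoke the joint convergence of the top eigenvalues and eigenvector entries. For the Gaussian ensembles $M_N$ is invariant under conjugation by $O(N)$ or $U(N)$, so the eigenvectors are Haar distributed independently of the eigenvalues; any finite projection of a Haar-distributed orthogonal/unitary matrix is asymptotically a $(1/\sqrt N)$-scaled matrix of i.i.d.\ real/complex Gaussians, and a direct moment computation identifies the limit of $N\psi_j(a)\overline{\psi_j(b)}$ for $j\le L$, $1\le a,b\le A$, with $[W_j]_{a,b}$ from Definition~\ref{Def_decorated_Airy}; combined with \eqref{eq:toairy} this yields the theorem for GOE/GUE. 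For a Wigner matrix satisfying the four-moment matching and the tail bound \eqref{eq:techn}, the same joint edge convergence is a consequence of the Tao--Vu four-moment theorem (whose absolute constant is the $C_0$ appearing in \eqref{eq:techn}) together with the subsequent eigenvector universality results (Knowles--Yin, Bourgade--Yau--Huang). The tail of the spectral sum over $j > L$ is then controlled, uniformly in $N$, by combining the exponential bound from the previous step with the isotropic delocalization estimate $N|\psi_j(a)|^2 = O(N^{o(1)})$ for $j$ near the edge provided by local semicircle laws.

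The hard part is precisely this assembly: the separate ingredients (edge universality of eigenvalues, Haar-type universality of edge eigenvectors, and isotropic delocalization) each exist in the literature under hypotheses compatible with \eqref{eq:techn}, but their combination must be uniform in the truncation parameter $L$ and in the finite set of indices $a,b$, so that the two limits $N\to\infty$ and $L\to\infty$ may be interchanged. Once this uniformity is in place, the theorem follows from the two steps above, and joint convergence in finitely many $\alpha$'s is automatic since only the polynomial $p_k$ (and not the eigensystem) depends on $\alpha$.
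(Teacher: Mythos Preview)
Your strategy matches the paper's: diagonalise, handle GOE/GUE via Haar-distributed eigenvectors and \eqref{eq:toairy}, then extend to general Wigner matrices by the Tao--Vu four-moment theorem. The execution differs in two places. For the tail of the spectral sum in the Gaussian case, you invoke rigidity-type bounds $\lambda_{j,N}\approx 2\sqrt N - c\,j^{2/3}N^{-1/6}$, whereas the paper takes conditional expectation on the eigenvalues and reduces to the trace convergence \eqref{eq:toairy_moments}; the latter is quicker and avoids importing rigidity. For the general Wigner step, you propose to first establish joint edge convergence of eigenvalues and eigenvector entries (via Tao--Vu plus Knowles--Yin/Bourgade--Yau type input) and then separately control the tail by isotropic delocalisation; the paper instead applies Tao--Vu \emph{directly} to the truncated spectral sum, packaging the cutoff into auxiliary test functions $G_1,G_2,G_3$ that satisfy the Tao--Vu derivative bounds, so no separate delocalisation estimate is needed. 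Your route is sound but imports more external machinery; the paper's is more self-contained, needing only the single Tao--Vu comparison as stated.
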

 If instead of the matrix elements we deal with the trace of the matrix in the LHS of
 \eqref{eq_matrix_element_convergence}, then Theorem \ref{Theorem_edge} (with different technical conditions)
 turns into the statement of \cite{Sosh} describing the universality of the eigenvalue distribution at the spectral
 edges. The matrix elements, which we now consider,
 contain the information on the eigenvectors in addition to the eigenvalues. If we deal with
 GOE/GUE, then eigenvectors are uniformly distributed on the unit sphere (of $\mathbb R^N$ for $\beta=1$ and of $\mathbb C^N$ for $\beta = 2$, and it
 immediately follows that as $N\to\infty$ their components become independent Gaussians. This is
 the reason for the appearance of $\{u_i^j\}$, $\{v_i^j\}$ in Definition \ref{Def_decorated_Airy}.
 The extension to more general Wigner matrices in Theorem
 \ref{Theorem_edge} uses the four moments theorem of \cite{TV}, which essentially says
 that the asymptotic in the general case is the same as for GOE/GUE. A proof of Theorem
 \ref{Theorem_edge} along these lines is given in Section \ref{s:proofs}.

\medskip
 Combining Theorems \ref{Theorem_KPZ}, \ref{Theorem_edge} we arrive at an intriguing corollary.
\begin{corollary}\label{c:main} Let $Z^\beta= Z$ for $\beta=2$ and $\tilde Z$ for $\beta=1$. The one-point distribution of $Z^\beta$ satisfies
\begin{equation}
\label{eq_KPZ_to_moments}
 \lim_{N\to\infty} \frac{N}{\beta} \left[\left(\frac{M_N}{2\sqrt
N}\right)^{2 \lfloor \alpha N^{2/3} \rfloor}+ \left(\frac{M_N}{2\sqrt N}\right)^{2 \lfloor \alpha
N^{2/3} \rfloor+ 1}\right]_{1,1}\stackrel{d}{=}  Z(2\alpha^3, 0) \exp(\alpha^3/12),
\end{equation}
for any $M_N$ as in Theorem \ref{Theorem_edge} and $\alpha > 0$.\end{corollary}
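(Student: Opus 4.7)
The plan is to read off the corollary directly by chaining Theorem \ref{Theorem_edge} with Theorem \ref{Theorem_KPZ}, treating the two values of $\beta$ separately and keeping careful track of the normalization factor $N/\beta$ in \eqref{eq_KPZ_to_moments} versus the factor $N/2$ appearing in \eqref{eq_matrix_element_convergence}.

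First, I would apply Theorem \ref{Theorem_edge} to the sequence of matrices $M_N$ at the single point $(a,b)=(1,1)$ and at the given $\alpha>0$; this yields
\[
\frac{N}{2}\left[\left(\frac{M_N}{2\sqrt N}\right)^{2\lfloor \alpha N^{2/3}\rfloor}+\left(\frac{M_N}{2\sqrt N}\right)^{2\lfloor \alpha N^{2/3}\rfloor+1}\right]_{1,1} \; \xrightarrow{\,d\,}\; \left[\int_\mathbb R e^{\alpha\lambda}\,\Ai^\dec_\beta(d\lambda)\right]_{1,1}.
\]
Next, I would identify the right-hand side via Theorem \ref{Theorem_KPZ}. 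For $\beta=2$, \eqref{eq_KPZ_2} gives $[\int e^{\alpha\lambda}\Ai^\dec_2(d\lambda)]_{1,1}\stackrel{d}{=} Z(2\alpha^3,0)\exp(\alpha^3/12)$, and since $N/\beta=N/2$ coincides with the prefactor of Theorem \ref{Theorem_edge}, \eqref{eq_KPZ_to_moments} follows at once. For $\beta=1$, \eqref{eq_KPZ_1} gives $2[\int e^{\alpha\lambda}\Ai^\dec_1(d\lambda)]_{1,1}\stackrel{d}{=}\tilde Z(2\alpha^3,0)\exp(\alpha^3/12)$, so multiplying the convergence above by $2$ (equivalently, using the prefactor $N/\beta=N$ in place of $N/2$) again yields \eqref{eq_KPZ_to_moments}.

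Since both Theorem \ref{Theorem_KPZ} and Theorem \ref{Theorem_edge} are assumed, there is no genuine obstacle: the only point to monitor is the bookkeeping of the factor $\beta$, which precisely reconciles the $\tfrac12$ in front of $(u^n_a+\ii v^n_a)(u^n_b-\ii v^n_b)$ (present for $\beta=2$ but absent for $\beta=1$ in Definition \ref{Def_decorated_Airy}) with the extra factor of $2$ appearing in \eqref{eq_KPZ_1} but not in \eqref{eq_KPZ_2}. Everything else is verbatim transfer of statements, so the proof is a one-line composition of the two preceding theorems.
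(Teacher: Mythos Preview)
Your argument is correct and is exactly the one the paper intends: the corollary is stated there as the immediate combination of Theorems \ref{Theorem_KPZ} and \ref{Theorem_edge}, with no further proof given. Your bookkeeping of the $N/\beta$ versus $N/2$ prefactor is the only thing that needs saying, and you have handled it correctly.
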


\begin{remark}\label{rmk:gener}
The conditions on the matrix $M_N$ may be relaxed: The matching of the fourth moment
could probably be dropped using the methods of \cite{KY}. As to the tail decay, the
maximal-generality  condition  for   the eigenvalue convergence \eqref{eq:toairy}
was found in \cite{LeeYin}; we expect it to be sufficient also for the problem
considered here.

In a different direction, passing to a slightly different matrix ensemble (e.g.\ sample covariance matrices or
shifted Wigner matrices), one can obtain a statement similar to Corollary~\ref{c:main} with one,
rather than two, terms in the left-hand side.

In yet another direction, Theorem \ref{Theorem_KPZ} and Corollary \ref{c:main} should generalize to
unitary (or orthogonally) invariant random matrix ensembles, the local eigenvalue statistics of which are discussed in \cite{PS,DG} and the
monographs \cite{D,PSbook}.
\end{remark}

The matrix element $[M_N^k]_{1,1}$  admits the following combinatorial interpretation:
\begin{equation}\label{eq:comb}[M_N^k]_{1,1} = \sum_{j_1, \cdots, j_{k-1}} M_N(1, j_1) M_N(j_1, j_2) \cdots M_N(j_{k-1},1)~.\end{equation}
The tuple $1, j_1, \cdots, j_k, 1$ can be thought of as a path in the complete graph, the product in the right-hand side
of (\ref{eq:comb}) collects the random weights along the path.

On the other hand, the Feynman--Kac formula for the Stochastic Heat Equation identifies $Z(t,x)$
 with a partition function of a Brownian directed polymer, see \cite{AKQ1} for a detailed treatment.
 This continuous polymer is a universal limit for discrete directed polymers in the intermediate disorder regime \cite{AKQ2}.
 Such identification
 bears similarities with \eqref{eq:comb}, however, note that there is no clear spatial structure in the latter.
 It would be interesting to find a more direct  relation between these models.

\bigskip

A $1d$  spatial structure can be introduced to GOE/GUE by passing to tridiagonal models of \cite{DE}.
Consider the matrix
$$
\widetilde M_N=\left(\begin{array}{ccccc}
\aa(N) & \bb(N-1) & 0 & \cdots & 0 \\
\bb(N-1) & \aa(N-1) & \bb(N-2) & \ddots & \vdots \\
0 & \bb(N-2) & \aa(N-2) & \ddots & 0 \\
\vdots & \ddots & \ddots & \ddots & \bb(1) \\
0 & \cdots & 0 & \bb(1) & \aa(1)
\end{array}\right),
$$
where all $\aa(m)$, $m=1,2,\dots$, have the normal
distribution $N(0,2/\beta)$, and the $\bb(m)$, $m=1,2,\dots$, are $\beta^{-1/2}$
multiples of $\chi$-distributed random variables with parameters $\beta m$.
Here, the density of the $\chi$ distribution
with parameter $a$ on $\mathbb R_{\ge 0}$ is
\[
\frac{2^{1-k/2}}{\Gamma(a/2)}\,x^{a-1}\,e^{-x^2/2}, \quad x>0.
\]
For $\beta=1,2$, $\widetilde M_N$ are obtained from GOE/GUE by the
tridiagonalization procedure, which keeps the spectral measure of the $(1,1)$ matrix
element (i.e.\ the functional $f\mapsto \langle f(M_N) e_1,e_1\rangle$, where $e_1$
is the first basis vector)
 unchanged. This implies that the laws of
$[M_N^k]_{1,1}$ and $[\widetilde M_N^k]_{1,1}$ coincide (even jointly for several
$k$), and therefore, $M_N$ can be replaced with $\widetilde M_N$ in Corollary
\ref{c:main}. The large moments of $\widetilde M_N$ were studied in detail in
\cite{GSh}. A slight generalization of the results therein leads to an alternative
form of \eqref{eq_KPZ_to_moments}.

\begin{proposition} The random variables of
formula \eqref{eq_KPZ_to_moments} and Theorem \ref{Theorem_KPZ} coincide with the distributional limit
 \begin{equation}
\label{eq_KPZ_to_tridiag}
 \lim_{N\to\infty} \frac{N}{\beta} \left[\left(\frac{\widetilde M_N}{2\sqrt
N}\right)^{2 \lfloor \alpha N^{2/3} \rfloor}+ \left(\frac{\widetilde M_N}{2\sqrt
N}\right)^{2 \lfloor \alpha N^{2/3} \rfloor+ 1}\right]_{1,1},
\end{equation}
and is also given by
\begin{equation}\label{def:kernel}
\frac{1}{\beta\alpha  \sqrt{\pi \alpha}} \E_{\mathfrak e} \Biggl[\exp\left(
-\frac{1}{2}\int_0^{2\alpha} \mathfrak
e(t)\,\mathrm{d}t+\frac{1}{\sqrt{\beta}}\,\int_0^\infty L_a(\mathfrak
e)\,\mathrm{d}W(a)\right)\Biggr],
\end{equation}
where $\mathfrak e(t)$ is a standard Brownian excursion going from $0$ to $0$ in
time $2\alpha$, $L_a$ is its local time, and $W(a)$ is a standard Brownian motion.
\end{proposition}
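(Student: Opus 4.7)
The plan is to prove two separate claims: first, that the random variable appearing in \eqref{eq_KPZ_to_tridiag} agrees in distribution with the one in \eqref{eq_KPZ_to_moments} (and hence with the right-hand sides of Theorem~\ref{Theorem_KPZ}); and second, that both are equal to the expectation \eqref{def:kernel}. The first claim is essentially immediate from properties of the tridiagonalization, so the bulk of the work lies in the second.

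For the first claim, the tridiagonalization of $M_N$ with respect to the cyclic vector $e_1$ is a Householder reduction that fixes $e_1$ itself and merely conjugates the rest of the space by an orthogonal/unitary matrix. In particular, the spectral measure $f\mapsto\langle f(M_N)e_1,e_1\rangle$ is preserved, so the joint law of $\bigl([M_N^k]_{1,1}\bigr)_{k\ge 0}$ coincides with that of $\bigl([\widetilde M_N^k]_{1,1}\bigr)_{k\ge 0}$. Specialising to $k=2\lfloor\alpha N^{2/3}\rfloor$ and $k=2\lfloor\alpha N^{2/3}\rfloor+1$ (jointly over $\alpha$) gives the coincidence of \eqref{eq_KPZ_to_moments} and \eqref{eq_KPZ_to_tridiag}.

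For the second claim, I would expand
\[
 [\widetilde M_N^k]_{1,1}=\sum_{\text{walks}}\prod_{i=1}^k\widetilde M_N(j_{i-1},j_i),
\]
the sum running over walks $1=j_0,j_1,\dots,j_k=1$ in $\{1,\dots,N\}$ with steps $j_i-j_{i-1}\in\{-1,0,+1\}$. Setting $k=2K$ or $2K{+}1$ with $K=\lfloor\alpha N^{2/3}\rfloor$ and rescaling time by $N^{2/3}$ and the depth $h_i=j_i-1$ by $N^{1/3}$, the skeleton (obtained by erasing diagonal steps) should converge, after the usual edge rescaling, to a standard Brownian excursion $\mathfrak{e}$ of duration $2\alpha$. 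The off-diagonal weight at depth $h$ satisfies $\bb(N-h)^2/(4N)\approx\tfrac14(1-h/N)+O(N^{-1/2})$, and raising to the power $2L_h$ (twice the number of visits of the skeleton to depth $h$) yields, after taking the product over $h$, the deterministic factor $\exp(-\sum_h L_h\, h/N)$; via the occupation formula $\int_0^T\mathfrak{e}(t)\,dt=\int_0^\infty a\,L_a(\mathfrak{e})\,da$ this converges to $\exp\!\left(-\tfrac12\int_0^{2\alpha}\mathfrak{e}(t)\,dt\right)$. The Gaussian fluctuations of the $\bb(N-h)$'s and the diagonal entries $\aa(N-h+1)$ each contribute independent centred Gaussians at every depth with variance proportional to $L_a$; by a martingale central limit theorem across the many level crossings at each depth they aggregate into the stochastic integral $\tfrac{1}{\sqrt{\beta}}\int_0^\infty L_a(\mathfrak{e})\,dW(a)$. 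The prefactor $\tfrac{1}{\beta\alpha\sqrt{\pi\alpha}}$ then reflects the It\^o entrance density of excursions of duration $2\alpha$ combined with the $N/\beta$ normalisation on the left-hand side of \eqref{eq_KPZ_to_tridiag}.

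The hardest step, and the substance of the proof, is the asymptotic analysis just sketched. This is very close to what was carried out in \cite{GSh} for the \emph{trace} $\operatorname{tr}(\widetilde M_N)^k$; the only difference is that there one averages over the starting position of the walk (producing the trace), whereas here the starting position is pinned to $j_0=1$. Since the walks dominating either quantity are already concentrated near the top of the tridiagonal matrix, removing the average affects only the prefactor -- one replaces a summation over starting sites by the entrance density of the excursion at $t=0$ -- without altering the structure of the limiting expectation \eqref{def:kernel}. I would therefore deduce the second claim by adapting the estimates of \cite{GSh} to pinned endpoints, checking line by line that the combinatorial path counts and the martingale limit theorems continue to hold when both endpoints are fixed at the boundary $j=1$ and that the normalisation reorganises into $\tfrac{1}{\beta\alpha\sqrt{\pi\alpha}}$.
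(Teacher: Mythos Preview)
Your proposal is correct and follows the same route as the paper. The paper does not give a separate proof of this Proposition; it simply records, just before the statement, that tridiagonalization preserves the spectral measure at $e_1$ (your first claim), and then asserts that the expression \eqref{def:kernel} follows from ``a slight generalization'' of the path-counting analysis in \cite{GSh} --- precisely the generalization you describe, namely pinning the walk at the boundary site $j=1$ rather than summing over starting points.
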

\begin{remark}
 The limit \eqref{eq_KPZ_to_tridiag} exists and is given by \eqref{def:kernel} for quite general choice of distributions
 for $\aa(n)$, $\bb(n)$ in the definition of $\widetilde M_N$, see \cite[Assumption 2.1]{GSh}.
\end{remark}

Continuing the discussion started after Remark~\ref{rmk:gener}, we see that the expression \eqref{def:kernel} can be treated
as a partition function of a certain polymer. However, an important difference to the KPZ is that the white noise $\mathrm{d} W(a)$ in
\eqref{def:kernel} is one--dimensional, while $\dot W$ in the Stochastic Heat Equation is 2--dimensional space-time white noise. It would
be interesting to find a direct proof of the distributional identity between \eqref{def:kernel} and the expressions of Theorem \ref{Theorem_KPZ}.

\subsection*{Acknowledgements} V.G.~is partially supported by the NSF grant DMS-1664619,
by the Sloan Research Fellowship, and by NEC Corporation Fund for Research in
Computers and Communications. S.S.~is partially supported by the European Research
Council starting grant 639305 (SPECTRUM) and by a Royal Society Wolfson Research Merit Award.

\section{Proofs}\label{s:proofs}

\begin{proof}[Proof of Theorem \ref{Theorem_KPZ}]
For $\beta=2$ our starting point is the following exact relation between the distribution of $Z(t, 0)$
and the Airy process which was recently  found in \cite[Theorem 2.1]{BG} as a reformulation of the results of
 \cite{ACQ,Dots,CLDR,SS}:
for each $\alpha > 0$ and  $u \geq 0$,
\begin{equation}\label{eq:bg}
 \mathbb E \exp \left[ - u Z(2 \alpha^3, 0) \exp(\alpha^3/12) \right]
 = \mathbb E\left[ \prod_{k=1}^\infty \frac{1}{1 + u \exp(\alpha \lambda_k)}\right], \quad \{\lambda_i\}\sim \Ai_2.
\end{equation}
An analogue for $\beta=1$ is \cite[Theorem B]{BBCW}, \cite[Corollary 1.3]{Par}: for each $\alpha>0$ and $u\geq 0$,
\begin{equation}\label{eq:bbcw}
 \mathbb E \exp \left[ - u \tilde Z(2 \alpha^3, 0) \exp(\alpha^3/12) \right]
 = \mathbb E \left[\prod_{k=1}^\infty \frac{1}{\sqrt{1 + 4 u \exp(\alpha \lambda_k)}}\right], \quad \{\lambda_i\}\sim \Ai_1.
\end{equation}
By a uniqueness theorem for the Laplace transform, the result of Theorem \ref{Theorem_KPZ} would follow from
\begin{equation}\label{eq_Laplace_need}
 \mathbb{E} \exp \left[- u \times \text{LHS of \eqref{eq_KPZ_1} or \eqref{eq_KPZ_2}} \right] =
 \mathbb{E} \exp \left[- u \times \text{RHS of \eqref{eq_KPZ_1} or \eqref{eq_KPZ_2}}  \right]
\end{equation}
for all $u \geq 0$. For the left-hand side, the answer is given by \eqref{eq:bg}, \eqref{eq:bbcw}. For the right--hand side,
 the squared standard Gaussian and sum of the squares of two independent Gaussians are particular cases of $\chi^2_{\beta}$ distribution at $\beta=1,2$.
 The Laplace transform of $\chi_\beta^2$ is given by
$$
 \E \exp\Bigl( -v\chi^2_\beta \Bigr)=\frac{1}{(1+2v)^{\beta/2}},\quad v>-1/2.
$$
Therefore, using the factorisation of the Laplace transform of the sum of independent random variables factorizes, we have for $\beta=1,2$
\begin{equation}
 \E \exp\left(-v \beta \left[\int e^{\alpha \lambda} \Ai^\dec_\beta(d\lambda)\right]_{1,1} \right)=
 \mathbb E \left[\prod_{k=1}^\infty \frac{1}{(1 + 2 v \exp(\alpha \lambda_k))^{\beta/2}}\right]~, \quad v >0~,
\end{equation}
where $\{\lambda_i\}\sim \Ai_\beta$. Setting $v=2u/\beta^2$, we arrive at \eqref{eq:bg} and \eqref{eq:bbcw}.
\end{proof}

\begin{proof}[Proof of Theorem~\ref{Theorem_edge}]
First consider the case of GOE/GUE. We first observe that
\begin{equation}\label{eq:toairy_moments}
\lim_{N\to\infty} \frac{1}{2}\tr  \left[\left(\frac{M_N}{2\sqrt N}\right)^{2 \lfloor
\alpha N^{2/3} \rfloor}+ \left(\frac{M_N}{2\sqrt N}\right)^{2 \lfloor \alpha N^{2/3}
\rfloor+ 1}\right]   = \int_\mathbb R \exp(\alpha \lambda) \Ai_\beta(d\lambda)
\end{equation}
in distribution.
Although \eqref{eq:toairy_moments} is not a formal consequence of \eqref{eq:toairy},
it can be deduced from the latter using a simple estimate on the mean eigenvalue
density of the GOE/GUE. (The relation \eqref{eq:toairy_moments} is also proved for more general
Wigner matrices in \cite{Sosh}.)

Further, the eigenbasis of the GOE/GUE is independent of the spectrum (as follows
from the orthogonal/unitary invariance of the GOE/GUE probability density). The
matrix of coordinates of eigenvectors is a uniformly--random (i.e.\
Haar-distributed) element $O$/$U$ of the $N$--dimensional orthogonal/unitary group. As
$N\to\infty$, the matrix elements of $O$/$U$ multiplied by $\sqrt{N}$ become i.i.d.\
standard real/complex Gaussians. This is a
folklore fact which can be proved, for example, by sampling $O$/$U$ through the
Gram--Schmidt orthogonalization applied to GOE/GUE. We refer to  \cite{DEL} for
more details and quantitative estimates, and to \cite{DF} for a historical
discussion.

At this point, we write the $(a,b)$--th matrix element in the LHS of
\eqref{eq_matrix_element_convergence} through the eigenvalue--eigenvector expansion
as:
\begin{equation}
\label{eq_ev_ev}
 \frac{N}{2} \sum_{j=1}^N z_a^j \overline{z_b^j}\left(  \left(1+ \frac{\lambda_j-2\sqrt{N}}{2\sqrt N} \right)^{2\lfloor \alpha N^{2/3}\rfloor}+
 \left(1+ \frac{\lambda_j-2\sqrt{N}}{2\sqrt N} \right)^{2\lfloor \alpha
 N^{2/3}\rfloor+1}\right),
\end{equation}
where $\lambda_j$, $j=1,\dots,N$, are ordered eigenvalues of $M_N$ and $z^j_i$ are
 (real or complex) coordinates of corresponding eigenvectors. The convergence of
 $N^{1/6}(\lambda_j-2\sqrt{N})$ and $N^{1/6} (\lambda_{N-j}+2\sqrt N)$ to points of
 two (independent) $\Ai_\beta$ point processes together with convergence of $N z_a^j
 \overline{z_b^j}$ to products of independent Gaussian random variables, implies
 that terms of \eqref{eq_ev_ev} converge to \eqref{eq_decorated_Airy} with
 $f(\lambda)=\exp(\alpha\lambda)$. To justify the exchange of  summation with the limit, we take conditional expectation (conditioned on the eigenvalues)
 and use \eqref{eq:toairy_moments}.  This implies \eqref{eq_matrix_element_convergence}.

\smallskip

Now we pass to the case of general $M_N$. Let us consider the $(1,1)$ matrix element
with a fixed $\alpha$ (the proof for the joint distribution of several matrix
elements with different values of $\alpha$ only adds indices to the notation). We
rely on \cite[Theorem 8]{TV}, which we use in the following form. For a Hermitian
matrix $M$, denote by $p_j(M)$ the squared absolute value of the first coordinate of
the eigenvector corresponding to the $j$-the eigenvalue (ordered from the largest to
the smallest one).

\begin{thm*}[Tao-Vu]For any two Wigner matrices $M, M'$ satisfying the assumptions of Theorem~\ref{Theorem_edge} with the same $\beta$, there exists a small $\delta > 0$ such that the following holds. Let $k \leq N^\delta$, and let $G\in C^5(\mathbb R^k \times \mathbb R^k)$
be such that
\begin{equation}\label{eq:tvcond} \sup_{x \in \mathbb R^k \times\mathbb C^{k}} \max_{0 \leq i \leq 5} \|\nabla^i G(x)\| \leq N^{\delta}~. \end{equation}
Then for all sufficiently large $N$, any $1 \leq j_1, \cdots, j_k \leq N$
\[ \left| \E G((\sqrt{N}\lambda_{j_\alpha}(M))_{\alpha=1}^k, (N p_{j_\alpha}(M))_{\alpha=1}^k) - \E G((\sqrt{N}\lambda_{j_\alpha}(M'))_{\alpha=1}^k, (N p_{j_\alpha}(M'))_{\alpha=1}^k) \right| \leq N^{-\delta}~.\]
\end{thm*}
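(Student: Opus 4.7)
The plan is a Lindeberg-style swapping (or four-moment) argument, in the spirit of the original proof in \cite{TV}. Enumerate the $O(N^2)$ independent (up to Hermitian symmetry) entries of $M$ in some fixed order and build an interpolating chain of Hermitian matrices $M = M^{(0)}, M^{(1)}, \ldots, M^{(T)} = M'$, in which $M^{(s)}$ and $M^{(s-1)}$ differ only in a single entry (together with its conjugate partner off the diagonal), with the new entry already drawn from the $M'$-distribution. Since $T = O(N^2)$, it suffices to prove that a single swap changes $\E G((\sqrt{N}\lambda_{j_\alpha})_\alpha,(Np_{j_\alpha})_\alpha)$ by at most $N^{-2-c\delta}$, for some $c>0$ depending on the choice of $\delta$.

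For an individual swap, write the two matrices as $A + \xi V_{ij}$ and $A + \xi' V_{ij}$, where $V_{ij}$ is the Hermitian matrix unit at $(i,j)$, $A$ is the common remainder, and $\xi,\xi'$ are independent random variables with matching first four moments. I would Taylor-expand $G((\sqrt{N}\lambda_{j_\alpha}(A+\xi V_{ij}))_\alpha,(Np_{j_\alpha}(A+\xi V_{ij}))_\alpha)$ in $\xi$ around $\xi=0$ up to order five. The four-moment matching kills the $\xi^k$ terms with $k\le 4$ in expectation, leaving a remainder bounded by $\E|\xi|^5\cdot \sup_{|\eta|\le |\xi|}|\partial_\eta^5 G(\cdots)|$. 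By the moment assumption \eqref{eq:techn} and truncation of $\xi,\xi'$ at $|\xi|\le N^{\eta_0}$ (the discarded tail is absorbed by $\E|\xi|^{C_0}<\infty$), the fifth $\xi$-moment contributes a factor $N^{-5/2+o(1)}$ per swap, so I need to bound the fifth $\xi$-derivative by $N^{1/2-c\delta}$ on a high-probability event.

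By the chain rule the fifth $\xi$-derivative is a polynomial in the first five $\xi$-derivatives of each $\sqrt{N}\lambda_{j_\alpha}$ and $Np_{j_\alpha}$, multiplied by derivatives of $G$ that are bounded by $N^\delta$ thanks to \eqref{eq:tvcond}. Using the Hellmann--Feynman formula and higher-order resolvent perturbation theory, each such derivative can be expressed as a contour integral of matrix elements of the resolvent $R(z)=(A-z)^{-1}$ over a small circle surrounding the eigenvalue of interest. The needed a priori estimates are: local semicircle law and eigenvalue rigidity up to the edge (placing each top eigenvalue within $N^{o(1)}$ of its classical location on the scale $N^{-1/6}$), edge level repulsion (separating the top $k\le N^\delta$ eigenvalues, so that the contours do not collide), delocalisation of eigenvectors ($p_j(M)=O(N^{-1+o(1)})$), and off-diagonal isotropic resolvent bounds. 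These are known under \eqref{eq:techn} for Wigner matrices and together give $|\partial_\xi^m \sqrt{N}\lambda_{j_\alpha}|+|\partial_\xi^m Np_{j_\alpha}|\le N^{O(m\delta)}$ on an event of probability $1-O(N^{-100})$, which is enough to close the estimate.

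The hard part is precisely step three: deriving sharp, simultaneously valid bounds on the derivatives of individual top eigenvalues and eigenvector weights. Standard perturbation formulas produce denominators $(\lambda_{j_\alpha}-\lambda_{j'})^{-1}$ that are singular when eigenvalues are close, so one genuinely needs edge level repulsion, not just rigidity, and one must confine the analysis to the high-probability event on which all of (i)--(iv) above hold. An additional care point is the interaction between the truncation of $\xi,\xi'$ (forced by the polynomial tail in \eqref{eq:techn} rather than sub-Gaussian tails) and the Taylor remainder: one handles the tail event $\{|\xi|>N^{\eta_0}\}\cup\{|\xi'|>N^{\eta_0}\}$ separately using \eqref{eq:techn} and the trivial bound $|G|\le N^\delta$ inherited from \eqref{eq:tvcond}, showing its contribution is negligible compared with $N^{-\delta}$. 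Summing over the $O(N^2)$ swaps then yields the desired total error $N^{-\delta}$.
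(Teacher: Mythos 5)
This statement is not proved in the paper at all: it is quoted (in adapted notation) from \cite{TV}, Theorem 8, and used as a black box inside the proof of Theorem \ref{Theorem_edge}. So the only thing to compare your attempt with is the original Tao--Vu argument, and your sketch does follow its broad strategy: Lindeberg swapping of one entry at a time, fifth-order Taylor expansion with four-moment cancellation, truncation of the entries using \eqref{eq:techn}, and resolvent/perturbation bounds on the derivatives of $\sqrt N\lambda_{j}$ and $Np_{j}$ on a high-probability ``good'' event.

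However, as written the quantitative accounting does not close, and the place where it fails is exactly the heart of the theorem. In the paper's normalization the entries of $M$ are of order one (they match GOE/GUE moments, which have variance of order one), so after truncation $\E|\xi|^5=N^{o(1)}$, not $N^{-5/2+o(1)}$ as you claim; the factor $N^{-5/2}$ per swap has to come from the derivative estimates, namely each $\xi$-derivative of the \emph{arguments} $\sqrt N\lambda_{j_\alpha}$ and $Np_{j_\alpha}$ must gain a factor of roughly $N^{-1/2+o(1)}$ (e.g.\ for the first one, $\partial_\xi\bigl(\sqrt N\lambda_j\bigr)=\sqrt N\,u_j(a)\overline{u_j(b)}\le N^{-1/2+o(1)}$ by delocalization, and the higher ones carry eigenvalue-gap denominators that must be controlled by level repulsion). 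Your stated a priori bound $|\partial_\xi^m \sqrt N\lambda_{j_\alpha}|+|\partial_\xi^m Np_{j_\alpha}|\le N^{O(m\delta)}$ is far too weak: combined with $\E|\xi|^5=N^{o(1)}$ it yields a per-swap error of size $N^{O(\delta)}$, which is hopeless after summing over the $\sim N^2$ swaps. (If instead you intend $\xi$ to be the normalized entry of $M/(2\sqrt N)$, then $\E|\xi|^5\approx N^{-5/2}$, but the truncation at $N^{\eta_0}$ and the use of the $C_0$-moment bound \eqref{eq:techn} refer to the unnormalized entries, and derivatives with respect to the normalized entry are larger by $N^{m/2}$, so the same deficit reappears.) The improved per-derivative gain --- established in \cite{TV} via a ``good configuration'' event combining delocalization, rigidity and gap estimates, and needed here specifically for edge indices $j_\alpha\le N^{\delta}$ or $j_\alpha\ge N-N^{\delta}$, where the gap scale is $N^{-1/6}$ rather than the bulk scale --- is precisely what your sketch asserts rather than derives, so the attempt is an outline of the known proof with its central estimate missing, not a proof.
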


The application of the theorem closely follows the strategy of the proof of
\cite[Theorem~7]{TV}. Let $M'$ be sampled from the GOE or GUE for $\beta=1$ or $2$.
Set $k = \lfloor N^{\frac{\delta}{100}} \rfloor$. Pick a smooth bump function $\chi
\in C^\infty(\mathbb C)$ which is identically one for $[-\frac12, \frac12]$ and
vanishes outside $[-1,1]$. Then let
\begin{align*}
G_1(\vec{x}, \vec{y}) &= \prod_{j=1}^k \chi\left(\frac{y_j}{k}\right), \qquad
G_2(\vec{x}, \vec{y}) = \chi\left(\left(\frac{x_1}{2N} - 1\right)^2 N^{\frac43}
(\log\log N)^{-2} \right),\\
G_3(\vec{x}, \vec{y}) &= 1 - \chi\left(\left(\frac{x_k}{2N} -
1\right)^2 N^{\frac43} k^{-\frac12}\right).
\end{align*}
These three functions satisfy (\ref{eq:tvcond}). For the GOE/GUE,
\[ \mathbb E G_i\left((\sqrt{N}\lambda_j(M'))_{j=1}^k, (N p_j(M'))_{j=1}^k\right)  = 1 - o(1)~, \quad i
=1,2,3,\] hence according to the theorem also
\[ \mathbb E G_i\left((\sqrt{N}\lambda_j(M))_{j=1}^k, (N p_j(M))_{j=1}^k\right)  = 1 - o(1)~, \quad i = 1,2,3,\]
and similarly
\[ \mathbb E G_i\left((-\sqrt{N}\lambda_{N+1-j}(M))_{j=1}^k, (N p_{N+1-j}(M))_{j=1}^k\right)  = 1 - o(1)~, \quad i = 1,2,3.\]
In particular, the expressions inside the expectation are equal to $1$ on an event
of asymptotically full probability. For $i=1$ it means that $Np_j(M)$, $Np_{N+1-j}(M)$
are not large for $j\leq k$. For $i=2$ we conclude that $x_1$ is close to $2N$ and $x_N$
is close to $-2N$. For $i=3$ we get that $x_k$ is far enough from $2N$ and $x_{N-k}$
is far enough from $-2N$.

In particular, from the relations corresponding to $i=3$, we conclude that the
contribution of eigenvalues $\lambda_j$, $k<j\leq N-k$, in the expansion
\eqref{eq_ev_ev} is negligible.

Thus, it remains to prove that
\begin{equation} \label{eq:needtoshow}\begin{split}
& \sum_{j = 1}^k \left[\left(\frac{\lambda_j(M)}{2\sqrt
N}\right)^{2 \lfloor \alpha N^{2/3} \rfloor}+ \left(\frac{\lambda_j(M)}{2\sqrt
N}\right)^{2 \lfloor \alpha N^{2/3}+1 \rfloor}\right] \frac{N p_j(M)}{2}   \to  \int_\mathbb R
\exp(\alpha \lambda)
\Ai^\dec_{\beta}(d\lambda),\\
& \sum_{j = N-k+1}^N
\left[\left(\frac{-\lambda_j(M)}{2\sqrt N}\right)^{2 \lfloor \alpha N^{2/3}
\rfloor}- \left(\frac{-\lambda_j(M)}{2\sqrt N}\right)^{2 \lfloor \alpha N^{2/3}+1
\rfloor}\right] \frac{N p_j(M)}{2}    \to 0,
\end{split}\end{equation}
in distribution as $N \to \infty$, and these relations are already established for $M'$. Take $m$
satisfying $N^{\frac23}(\log \log N)^{-1} \leq m \leq N^{\frac23} \log \log N$, let $\phi$  be a smooth
function  with bounded derivatives up to order $5$, and set
\begin{multline*}
G(x_1, \cdots, x_{2k}, y_1, \cdots, y_{2k}) = \phi\left( \sum_{j=1}^{2k}
\left(\frac{x_j}{2N}\right)^{m} y_j \right) \\ \times \prod_{i=1}^3 \left[ G_i(x_1,
\cdots, x_k, y_1, \cdots, y_k) G_i(-x_{k+1}, \cdots, -x_{2k}, y_{k+1}, \cdots,
y_{2k}) \right]
 \end{multline*}
It also satisfies \eqref{eq:tvcond}, whence we can apply the Tao--Vu theorem with
$$
j_\alpha = \begin{cases}
\alpha~, &1\leq \alpha \leq k, \\
N+1-\alpha~, &k< \alpha \leq 2k,
\end{cases}
$$
to reduce $\E G$ for $M$ to that of $M'$. Since $\phi$ is arbitrary, we conclude
that \eqref{eq:needtoshow} holds.
\end{proof}

\end{document}